\journal{Journal of Computational and Applied Mathematics}
\begin{document}
\newtheorem{thm}{Theorem}
\newtheorem{lem}[thm]{Lemma}
\newtheorem{cor}[thm]{Corollary}
\newdefinition{rmk}{Remark}
\newproof{pf}{Proof}
\newproof{pot}{Proof of Theorem \ref{thm2}}

\begin{frontmatter}



\title{A first passage problem for a bivariate diffusion process: numerical
solution with an application to neuroscience.}


\author[ben]{Elisa Benedetto\corref{cor1}}
\ead{elisa.benedetto@unito.it}
\address[ben]{Department of Mathematics \lq\lq G. Peano\rq\rq, U\-ni\-ver\-si\-ty of To\-ri\-no, Via Carlo Al\-ber\-to 10, Turin, Italy}

\author[ben]{Laura Sacerdote}
\ead{laura.sacerdote@unito.it}
\author[ben]{Cristina Zucca\fnref{fn1}}
\ead{cristina.zucca@unito.it}
\fntext[fn1]{Corresponding author: Cristina Zucca, Department of Mathematics \lq\lq G. Peano\rq\rq, U\-ni\-ver\-si\-ty of To\-ri\-no, Via Carlo Al\-ber\-to 10, Turin, Italy, tel: +39-011-6702919, fax: +39-011-6702878.}

\begin{abstract}
We consider a bivariate diffusion process and we study the first passage time of one component through a boundary. We prove that its probability density is the unique solution of a new integral equation and we propose a numerical algorithm for its solution. Convergence properties of this algorithm are discussed and the method is applied to the study of the integrated Brownian Motion and to the integrated Ornstein Uhlenbeck process. Finally a model of neuroscience interest is also discussed.  
\end{abstract}

\begin{keyword}

First passage time \sep Bivariate diffusion \sep Integrated Brownian Motion \sep Integrated Ornstein Uhlenbeck process \sep Two-compartment neuronal model

\end{keyword}

\end{frontmatter}


\section{Introduction} \label{intro}
First passage time problems arise in a variety of applications ranging from
finance to biology, physics or psychology (\cite{smith,redner,RiccLectureNotes} and
examples cited therein). They have been largely studied (see \cite{SacGir} for a review on the subject):
analytical \cite{giornoSimmetry,ricc,NobileRiccSacExponential,Peskir,RicSacSato,Sac}, numerical or approximate results 
\cite{BuonocoreEtAl,dinardoRic,Borok,Durbin,Durbin2,RicciardiReview,TorresEtAl,wangPoltzberger,sacTomassetti} exist for specific classes of processes such as one dimensional
diffusions or Gaussian processes. On the contrary, the case of bivariate processes has not been widely studied yet. Indeed, results are available only for specific problems such as the first exit time of the considered two-dimensional
process from a specific surface \cite{giornoSymmetrytwoDim,lachal}. However
there is a set of instances where the random variable of interest is the
first passage time of one of the components of the bivariate process through a
constant or a time dependent boundary. Examples of this type of problems are the First Passage Time (FPT) of integrated processes such as the Integrated
Brownian Motion (IBM) or the Integrated Ornstein Uhlenbeck Process (IOU). Indeed, these one dimensional processes
should be studied as bivariate processes if the Markov property has to be preserved. Recent examples of applications of the IBM or of the IOU
processes have appeared in the metrological literature \cite{panfiloTavellaZucca} where these processes are alternatively used to
model the error of atomic clocks. In that case the crossing problem corresponds
to the first attainment of an assigned value by the atomic clock error.
Another application for this type of problems arises in neuroscience for the study of two-compartment models \cite{Lansky}.
Indeed, the membrane potential evolution of two communicating parts
of the neuron, the dendritic zone and the soma, can be depicted by a
two-dimensional diffusion process, whose components describe the two
considered zones. Furthermore, the time of a spike, i.e. the time when the membrane
potential changes its dynamics with a sudden hyperpolarization, is described
as the FPT of the second component through a boundary.
Motivated by these applications, we consider the FPT of one
component of a bivariate diffusion process through an assigned constant boundary, we prove an
integral equation for this distribution and we propose a numerical algorithm
for its solution. In Section 2 we introduce the notations and the necessary
mathematical background. In Section 3 we present the new integral equation and
the condition for the existence and uniqueness of its solution. In 
Section 4 we introduce a numerical algorithm for its solution and show its convergence properties.
In Section 5 we illustrate the proposed
numerical method through a set of examples, including the two-compartment model of a neuron. 
Finally in Section 6 we compare computational effort and reliability of the proposed numerical method with a totally simulation algorithm.

\section{Notations and Mathematical Background}\label{mathBack}

Let $\mathbf{X}(t)=\left(X_1(t),X_2(t)\right)'$, $t\geq t_0$, be a two-dimensional diffusion process on $I\subseteq \mathbb{R}^2$ originated in $\mathbf{X}(t_0)=\mathbf{y}$, where the superscript $'$ denotes the transpose of the vector.
Let $s<t$, we denote with
\begin{equation}\label{pdf}
f\left(\mathbf{x},t\left|\right.\mathbf{y},s\right)=
\frac{\partial^2}{\partial x_1\partial x_2}\mathbb{P}\left(\mathbf{X}(t)\leq \mathbf{x}\left|\mathbf{X}(s)=\mathbf{y}\right.\right)
\end{equation} 
its transition probability density function, where $\mathbf{x}=(x_1,x_2)$ and $\mathbf{y}=(y_1,y_2)$.\\

\noindent
In this paper we are concerned with the random variable FPT of the first component of the process $\mathbf{X}(t)$ through a boundary $S>y_1$:
$$T=\inf \left\{t\geq t_0 : X_1(t) \geq S\right\} .$$
To describe $T$ we will use its probability density function
\begin{equation} \label{g}
g\left(t\left|\mathbf{y},t_0\right.\right)=\frac{\partial}{\partial t}\mathbb{P}\left(T < t\left|\mathbf{X}(t_0)=\mathbf{y}\right.\right).
\end{equation}
Furthermore let $Z(t)$ be a random variable whose distribution coincides with the conditional distribution of $X_2(t)$ given $T=t$
\begin{equation} \label{densCond}
\mathbb{P}\left(X_2(T)<z\left|T=t;\mathbf{X}(t_0)=\mathbf{y}\right.\right).
\end{equation}

\noindent
We will also consider the joint distribution of $\left(X_2(T) , T\right)$ and its probability density function
\begin{equation} \label{gc}
g_c\left((S,z),t\left|\mathbf{y},t_0\right.\right)=
\frac{\partial^2}{\partial z\partial t}\mathbb{P}\left(X_2(T)<z, T<t\left|\mathbf{X}(t_0)=\mathbf{y}\right.\right) \mbox{, }z\in\mathbb{R}\mbox{, }t\in[t_0,\infty].
\end{equation}
We denote with $\mathbb{E}_X(h(X))$ the expectation with respect to the probability measure induced by the random variable $X$. We skip the subscript if there is no possibility of misunderstanding.
\noindent
In the following theorem we link (\ref{pdf}) with (\ref{gc}). 
\begin {thm}
For $x_1>S$ it holds
\begin{eqnarray}\label{FortetInt}
&&\mathbb{P}\left(\mathbf{X}(t)>\mathbf{x}|\mathbf{X}(t_0)=\mathbf{y}\right)\\
&&= \int_{t_0}^t d\vartheta \int_{-\infty}^{+\infty} g_c\left((S,z),\vartheta\left|\right.\mathbf{y},t_0\right) 
\mathbb{P}\left(\mathbf{X}(t)>\mathbf{x}|X_1(\vartheta)=S,X_2(\vartheta)=z\right)dz.\nonumber
\end{eqnarray}
If the joint probability density function $f\left(\mathbf{x},t\left|\right.\mathbf{y},t_0\right)$ exists, it holds
\begin{equation}\label{Fortet}
f\left(\mathbf{x},t\left|\right.\mathbf{y},t_0\right) = \int_{t_0}^t d\vartheta \int_{-\infty}^{+\infty} g_c\left((S,z),\vartheta\left|\right.\mathbf{y},t_0\right) f\left(\mathbf{x},t\left|\right.(S,z),\vartheta\right) dz .
\end{equation}
\end{thm}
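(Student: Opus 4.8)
The plan is to derive \eqref{FortetInt} as a direct consequence of the strong Markov property of the bivariate diffusion $\mathbf{X}(t)$, applied at the stopping time $T$, and then to obtain \eqref{Fortet} by differentiating in $x_1$ and $x_2$. The starting observation is that, since $x_1 > S$ and $X_1$ starts below $S$ (because $S > y_1$), the event $\{X_1(t) > x_1\}$ forces $X_1$ to have crossed level $S$ at some time $\vartheta \in [t_0,t]$; hence $\{X_1(t) > x_1, X_2(t) > x_2\} \subseteq \{T \le t\}$ up to a null set, and on this event $T$ is a genuine (finite) stopping time. The strategy is to condition on $\mathcal F_T$.

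First I would write, using the law of total probability with respect to the joint law of $(X_2(T),T)$ restricted to $\{T\le t\}$,
\begin{equation*}
\mathbb{P}\left(\mathbf{X}(t)>\mathbf{x}\,|\,\mathbf{X}(t_0)=\mathbf{y}\right)
= \mathbb{E}\left[\mathbf{1}_{\{T\le t\}}\,\mathbb{P}\left(\mathbf{X}(t)>\mathbf{x}\,\big|\,\mathcal F_T\right)\,\big|\,\mathbf{X}(t_0)=\mathbf{y}\right].
\end{equation*}
Next, by the strong Markov property, on $\{T\le t\}$ the conditional law of $\mathbf{X}(t)$ given $\mathcal F_T$ depends only on $(T, \mathbf{X}(T)) = (\vartheta, S, X_2(T))$, because $X_1(T)=S$ by continuity of the first component at the first passage. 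Therefore $\mathbb{P}(\mathbf{X}(t)>\mathbf{x}\,|\,\mathcal F_T) = \mathbb{P}(\mathbf{X}(t)>\mathbf{x}\,|\,X_1(\vartheta)=S, X_2(\vartheta)=z)$ evaluated at $\vartheta = T$, $z = X_2(T)$. Substituting the density $g_c((S,z),\vartheta\,|\,\mathbf{y},t_0)$ of $(X_2(T),T)$ from \eqref{gc} turns the expectation into the iterated integral in \eqref{FortetInt}. Finally, assuming the transition density $f$ exists, I would differentiate both sides with respect to $x_1$ and $x_2$; since $\mathbb{P}(\mathbf{X}(t)>\mathbf{x}|\cdot) = \int_{x_1}^\infty\!\int_{x_2}^\infty f\,dw$, the mixed partial $\partial^2/\partial x_1\partial x_2$ applied to the left side yields $f(\mathbf{x},t|\mathbf{y},t_0)$ (up to sign bookkeeping from the tail integral) and, passing the derivative under the $\vartheta$- and $z$-integrals, applied to the right side yields $f(\mathbf{x},t|(S,z),\vartheta)$, giving \eqref{Fortet}.

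The main obstacle I anticipate is the rigorous justification of the two analytic steps hidden behind the strong Markov argument: first, that $T$ is almost surely finite on the event $\{\mathbf X(t) > \mathbf x\}$ and that $X_1$ hits $S$ continuously (so that indeed $X_1(T)=S$ rather than overshooting), which requires at least path-continuity of $X_1$ — reasonable for a diffusion but worth stating as a hypothesis; and second, the interchange of $\partial^2/\partial x_1\partial x_2$ with the double integral in $\vartheta$ and $z$, which needs a dominated-convergence or Fubini-type argument and the existence and suitable regularity of the conditional density $g_c$. The continuity/overshoot point is the more delicate one conceptually, since it is exactly what makes the first component equal to $S$ at time $T$ and hence lets the post-$T$ evolution be indexed by the single spatial parameter $z$; the differentiation step is routine once the requisite integrability is assumed.
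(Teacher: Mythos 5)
Your proposal is correct and follows essentially the same route as the paper: conditioning on $\mathcal{F}_T$ via the tower property, invoking the strong Markov property so that the post-$T$ law depends only on $(T,\mathbf{X}(T))=(\vartheta,(S,z))$, integrating against the joint density $g_c$, and then differentiating in $x_1$ and $x_2$ to pass from (\ref{FortetInt}) to (\ref{Fortet}). The only cosmetic difference is that the paper states the identity for a general bounded Borel function $h$ supported on $(S,\infty)\times\mathbb{R}$ (which is what guarantees $T\le t$ on the relevant event) before specializing to the indicator, whereas you work with the indicator directly; your explicit attention to $X_1(T)=S$ by path continuity and to the interchange of differentiation and integration covers the points the paper leaves implicit.
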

\begin{proof}
Equation (\ref{FortetInt}) is a consequence of the strong Markov property, as explained in the following.\\
Let $h:(S,\infty)\times \mathbb{R}\rightarrow\mathbb{R}$ be a bounded, Borel measurable function and let $\mathcal{F}_T$ be the $\sigma$-algebra generated by the process $\mathbf{X}(t)$  up to the random time $T$. We get
\begin{eqnarray}
\mathbb{E}[h(\mathbf{X}(t))|\mathbf{X}(t_0)=\mathbf{y}]
&=&\mathbb{E}[\mathbb{E}[h(\mathbf{X}(t))| \mathcal{F}_T;\mathbf{X}(t_0)=\mathbf{y}]]\\
&=&\mathbb{E}[\mathbb{E}[h(\mathbf{X}(t))|\mathbf{X}(T)]]\nonumber\\
&=&\int_{t_0}^td\vartheta\int_{-\infty}^{+\infty}\mathbb{E}[h(\mathbf{X}(t))|\mathbf{X}(\vartheta)=(S,z)]g_c((S,z),\vartheta|\mathbf{y},t_0) dz \nonumber
\end{eqnarray}
where the first equality uses the double expectation theorem while the second one uses the strong Markov property.
Choosing $h(\mathbf{y})=I_{\{x_1,\infty\}\times\{x_2,\infty\}}(\mathbf{y})$ we get (\ref{FortetInt}). Finally, writing the conditional probability $\mathbb{P}\left(\mathbf{X}(t)>\mathbf{x}|X_1(\vartheta)=S,X_2(\vartheta)=z\right)$ as a double integral, changing the order of integration and differentiating (\ref{FortetInt}) with respect to $x_1$ and $x_2$ we get (\ref{Fortet}).
\end{proof}

\begin{rmk}
Equation (\ref{Fortet}) was introduced without proof in \cite{giornoSymmetrytwoDim}.
\end{rmk}

The transition probability density function (\ref{pdf}) is known in a few instances. One of these cases is a process solution of \textit{linear (in the narrow sense) stochastic differential equation} \cite{Arnold}
\begin{equation}\label{sde}
\left\{
      \begin{array}{lr}
      d\mathbf{X}(t) = \left[\mathbf{A}(t)\mathbf{X}(t)+\mathbf{M}(t)\right]dt+\mathbf{G}(t)d\mathbf{B}(t),&\hspace{5mm} t\geq t_0\\
      \hspace{3mm}\\
      \mathbf{X}(t_0) =  \mathbf{y}
      \end{array}
\right.
\end{equation}
where $\mathbf{A}(t)$ and $\mathbf{G}(t)$ are $2\times 2$ matrices, $\mathbf{M}$ is a vector of $2$ components and $\mathbf{B}(t)$ is a bivariate standard Brownian motion.\\
The solution of (\ref{sde}), corresponding to the initial value $\mathbf{y}$ at instant $t_0$, is 
\begin{equation}
\mathbf{X}(t)=\mathbf{\phi}(t,t_0)\left[\mathbf{y}+\int_{t_0}^{t}\mathbf{\phi}(u,t_0)^{-1}\mathbf{M}(u)du + \int_{t_0}^{t}\mathbf{\phi}(u,t_0)^{-1}\mathbf{G}(u)d\mathbf{B}(u)\right],
\end{equation}
where $\mathbf{\phi}(t,t_0)$ is the solution of the homogeneous matrix equation
\begin{equation}
\frac{d}{dt}\mathbf{\phi}(t,t_0)=\mathbf{A}(t)\mathbf{\phi}(t,t_0)  \hspace{1mm}\mbox{ with }\hspace{1mm} \mathbf{\phi}(t_0,t_0)=\mathbf{I} \mbox{.}
\end{equation}
For $t\in[0,\infty]$, the diffusion process has a two-dimensional normal distribution with expectation vector
\begin{equation} \label{vector}
\mathbf{m}(t\left|\mathbf{y},t_0\right.):=\mathbb{E}(\mathbf{X}(t)|\mathbf{X}(t_0)=\mathbf{y})=\mathbf{\phi}(t,t_0)\left[\mathbf{y}+\int_{t_0}^{t}\mathbf{\phi}(u,t_0)^{-1}\mathbf{M}(u)du\right]
\end{equation}
and $2\times2$ conditional covariance matrix
\begin{eqnarray} \label{matrix}
\mathbf{Q}(t\left|\mathbf{y},t_0\right.)=\mathbf{\phi}(t,t_0)\left[\int_{t_0}^{t}\mathbf{\phi}(u,t_0)^{-1}\mathbf{G}(u)\mathbf{G}(u)^{'}(\mathbf{\phi}(u,t_0)^{-1})^{'}du\right]\mathbf{\phi}(t,t_0)^{'}, 
\end{eqnarray}
where the superscript $'$ denotes the transpose of the matrix.

\noindent
In the autonomous case ($\mathbf{A}(t)=\mathbf{A}$, $\mathbf{M}(t)=\mathbf{M}$ and $\mathbf{G}(t)=\mathbf{G}$), expressions (\ref{vector}) and (\ref{matrix}) are simplified
\begin{eqnarray}\label{newvector}
\mathbf{m}(t\left|\mathbf{y},t_0\right.)&=&e^{\mathbf{A} (t-t_0)}\left[\mathbf{y}+\int_{t_0}^{t}e^{-\mathbf{A} (u-t_0)}\mathbf{M}du\right]
\end{eqnarray}
\begin{eqnarray}\label{newmatrix}
\mathbf{Q}(t\left|\mathbf{y},t_0\right.)&=&e^{\mathbf{A}(t-t_0)}\left[\int_{t_0}^{t}e^{-\mathbf{A}(u-t_0)}\mathbf{G}\mathbf{G}^{'}
                      e^{-\mathbf{A}^{'}(u-t_0)}du\right]e^{\mathbf{A}^{'}(u-t_0)}\\ \nonumber
                   &=&\int_{t_0}^{t}e^{\mathbf{A} (t-u)}\mathbf{G}\mathbf{G}^{'}e^{\mathbf{A}^{'}(t-u)}du.
\end{eqnarray}

\noindent
For Gaussian or constant initial condition, the solution of (\ref{sde}) is itself a Gaussian process, frequently known as \textit{Gauss-Markov process.}\\
Examples of Gauss-Markov processes are the Integrated Brownian Motion (IBM), the Integrated Ornstein Uhlenbeck Process (IOU). Also the underlying process of the two-compartment neural model \cite{Lansky} is a Gauss-Markov process.\\

\noindent
If $\det \mathbf{Q}(t\left|\right.\mathbf{y},t_0)\neq 0$ for each $t$, the probability density function $f\left(\mathbf{x},t\left|\right.\mathbf{y},t_0\right)$ of any two-dimensional Gauss-Markov process is
$$f\left(\mathbf{x},t\left|\right.\mathbf{y},t_0\right)=\frac{\exp\left\{-\frac{1}{2}\left[\mathbf{x}-\mathbf{m}(t\left|\mathbf{y},t_0\right.)\right]^{'}\mathbf{Q}(t\left|\mathbf{y},t_0\right.)^{-1}\left[\mathbf{x}-\mathbf{m}(t\left|\mathbf{y},t_0\right.)\right]\right\}}{2\pi\sqrt{\det \mathbf{Q}(t\left|\right.\mathbf{y},t_0)}}$$
and follows the Chapman-Kolmogorov equation \cite{RiccLectureNotes}.\\

\section{An Integral Equation for the FPT Distribution} \label{FPT distrib}

Let us consider a diffusion process $\left\{\mathbf{X}(t),t\geq 0\right\}$ originated in $\mathbf{y}=\mathbf{0}$ at $t_0=0$.
It holds
\begin{thm} \label{esistenza e unicit}
If 
\begin{equation} \label{hypothesis}
\mathbb{P}\left(X_1(t)\geq S\left|X_1(\vartheta)=S,X_2(\vartheta)=z\right.\right) \mbox{ , } z\in\mathbb{R} \mbox{ , }\vartheta\in\left[0,t\right]
\end{equation}
and its derivative with respect to $t$ are continuous in $0 \leq \vartheta \leq t$, then the FPT probability density function is the solution of the following integral equation: 
\begin{eqnarray}\label{integral1}
&&\mathbb{P}\left(X_1(t)\geq S\left|\mathbf{X}(0)=\mathbf{0}\right.\right)\\
&=&\int_0^t d\vartheta \; g\left(\vartheta\left|\right.\mathbf{0},0\right)
\mathbb{E}_{Z(\vartheta)}\left[\mathbb{P}\left(X_1(t)\geq S\left|X_1(\vartheta)=S,X_2(\vartheta)\right.\right)\right]\nonumber
\end{eqnarray}
where the distribution of $Z(\vartheta)$ is given by (\ref{densCond}).\\
The solution of (\ref{integral1}) exists and it is unique.
\end{thm}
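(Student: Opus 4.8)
The plan is to derive the integral equation (\ref{integral1}) from the general Fortet-type relation (\ref{FortetInt}) of Theorem~1, and then treat existence and uniqueness as a question about a Volterra integral equation of the first kind. For the derivation, I would start from (\ref{FortetInt}) with $\mathbf{y}=\mathbf{0}$, $t_0=0$, and integrate out the second spatial coordinate $x_2$ by letting $x_2\to-\infty$ (equivalently, integrate the density form (\ref{Fortet}) over $x_2\in\mathbb{R}$). This collapses the event $\{\mathbf{X}(t)>\mathbf{x}\}$ to $\{X_1(t)\ge S\}$ on both sides. On the right-hand side, $g_c((S,z),\vartheta\mid\mathbf{0},0)$ factorizes as $g(\vartheta\mid\mathbf{0},0)$ times the conditional density of $X_2(\vartheta)$ given $T=\vartheta$, i.e. the density of $Z(\vartheta)$ from (\ref{densCond}); the inner $dz$-integral against $\mathbb{P}(X_1(t)\ge S\mid X_1(\vartheta)=S,X_2(\vartheta)=z)$ is then exactly $\mathbb{E}_{Z(\vartheta)}[\,\cdot\,]$. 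This yields (\ref{integral1}).

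Next I would cast (\ref{integral1}) in the canonical Volterra form. Writing $F(t):=\mathbb{P}(X_1(t)\ge S\mid\mathbf{X}(0)=\mathbf{0})$ and $k(t,\vartheta):=\mathbb{E}_{Z(\vartheta)}[\mathbb{P}(X_1(t)\ge S\mid X_1(\vartheta)=S,X_2(\vartheta))]$, equation (\ref{integral1}) reads $F(t)=\int_0^t k(t,\vartheta)\,g(\vartheta)\,d\vartheta$, a linear Volterra equation of the first kind for the unknown $g$. The hypothesis~(\ref{hypothesis}) guarantees that $k(t,\vartheta)$ and $\partial k/\partial t$ are continuous on $0\le\vartheta\le t$, and along the diagonal $k(\vartheta,\vartheta)=\mathbb{P}(X_1(\vartheta)\ge S\mid X_1(\vartheta)=S,\dots)=1\ne 0$. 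This non-vanishing of the kernel on the diagonal is the standard condition allowing one to differentiate the equation in $t$ and convert it to a Volterra equation of the second kind:
\begin{equation}\label{secondkind}
g(t)=F'(t)-\int_0^t \frac{\partial k}{\partial t}(t,\vartheta)\,g(\vartheta)\,d\vartheta ,
\end{equation}
using $k(t,t)=1$. (One must also check $F$ is differentiable, which follows since $F$ is given by the left-hand side of (\ref{FortetInt}) and the right-hand side is differentiable in $t$ under the stated continuity hypotheses.)

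For existence and uniqueness I would invoke the classical theory of linear Volterra integral equations of the second kind: since the kernel $\partial k/\partial t$ is continuous (hence bounded on compact $t$-intervals) and the forcing term $F'$ is continuous, the Neumann/Picard iteration converges and (\ref{secondkind}) has a unique continuous solution on every $[0,T]$; the resolvent kernel can be built explicitly by successive substitutions, with the usual $\sum_n \|K\|^n t^n/n!$ bound ensuring convergence. Any solution of the first-kind equation (\ref{integral1}) solves (\ref{secondkind}) after differentiation, and conversely integrating (\ref{secondkind}) and using $k(t,t)=1$ recovers (\ref{integral1}); hence the first-kind equation inherits the unique solvability.

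**The main obstacle** I anticipate is not the Volterra machinery, which is routine, but the justification of the reduction steps: interchanging the limit/integral in $x_2$, verifying that $g_c$ genuinely factorizes through the conditional law (\ref{densCond}) in the required measurable way, and confirming the differentiability of $F(t)$ — these are the places where the regularity hypothesis (\ref{hypothesis}) on (\ref{hypothesis}) and its $t$-derivative is actually consumed. A secondary subtlety is that (\ref{secondkind}) and (\ref{integral1}) are \emph{equivalent} only because $k(t,t)\equiv 1$; I would make that identity explicit, since it is what powers the whole argument.
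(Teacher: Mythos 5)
Your derivation of (\ref{integral1}) and your treatment of existence and uniqueness follow essentially the same route as the paper: specialize (\ref{FortetInt}) to $x_1=S$, $x_2=-\infty$, factor $g_c$ through the conditional law (\ref{densCond}), and then handle the resulting first-kind Volterra equation by the classical differentiate-to-second-kind reduction (the paper simply cites Theorem 5.1 of \cite{Linz}, whose proof is exactly that reduction). One point needs correcting, however: the diagonal value of the kernel is \emph{not} identically $1$. Plugging $t=\vartheta$ formally does give $\mathbb{P}\left(X_1(\vartheta)\geq S\left|X_1(\vartheta)=S,\ldots\right.\right)=1$, but the continuous extension of $k(t,\vartheta)$ to the diagonal --- which is what the continuity hypothesis refers to and what enters the second-kind equation --- is the limit $C/2$ computed in Remark \ref{C}, and the paper's examples show that $C$ can equal $1$ (the IOU process) as well as $2$ (the IBM), so $k(t,t)$ can be $1/2$. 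This does not damage your argument: the reduction only requires that the diagonal value be continuous and bounded away from zero, and the second-kind equation then carries a factor $1/k(t,t)$ rather than relying on $k(t,t)\equiv 1$. You should restate your closing ``key identity'' accordingly, since as written it would fail for the IOU and the two-compartment examples treated later in the paper.
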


\begin{proof}
Let us consider (\ref{FortetInt}) with $x_1=S$ and $x_2=-\infty$,
we get
\begin{eqnarray}\label{integral2}
&&\mathbb{P}\left(X_1(t)>S|\mathbf{X}(0)=\mathbf{0}\right)\\
&&= \int_{0}^t d\vartheta \int_{-\infty}^{+\infty} g_c\left((S,z),\vartheta\left|\right.\mathbf{0},0\right) 
\mathbb{P}\left(X_1(t)>S|X_1(\vartheta)=S,X_2(\vartheta)=z\right)dz.\nonumber
\end{eqnarray}

\noindent
Considering 
$$\mathbb{P}\left(X_2(T)<z,T<t \left|\mathbf{X}(0)=\mathbf{0}\right.\right)=
\int_0^t d\tau \; \mathbb{P}\left(X_2(\tau)<z\left|T=\tau,\mathbf{X}(0)=\mathbf{0}\right.\right)g\left(\tau\left|\mathbf{0},0\right.\right),$$ 
taking the derivatives with respect to $z$ and $t$, using (\ref{gc}), we get
\begin{equation} \label{gc2}
g_c\left((S,z),t\left|\mathbf{0},0\right.\right)=\frac{\partial}{\partial z}\mathbb{P}\left(X_2(T)<z\left|T=t;\mathbf{X}(0)=\mathbf{0}\right.\right)g\left(t\left|\mathbf{0},0\right.\right) \mbox{.}
\end{equation}

\noindent
Substituting (\ref{gc2}) in (\ref{integral2}) we get the integral equation (\ref{integral1}).
It is a first kind Volterra equation with regular kernel
$$k(t,\vartheta)=\mathbb{E}_{Z(\vartheta)}\left[\mathbb{P}\left(X_1(t)\geq S\left|X_1(\vartheta)=S,X_2(\vartheta)\right.\right)\right],$$
because $k(t,\vartheta)$ is bounded. In particular $k(t,t)$ does not vanish for any $t\geq 0$.\\
Due to the hypothesis (\ref{hypothesis}), the kernel of the Volterra equation (\ref{integral1})
and its derivative respect to $t$ are continuous for $0 \leq \vartheta \leq t$.\\
Similarly, the left hand side of equation (\ref{integral1}) and its derivative respect to $t$ are continuous for $t \geq 0$.
Furthermore $\mathbb{P}\left(X_1(0)\geq S\left|\mathbf{0},0\right.\right)=0$.\\
Thus, applying Theorem 5.1 of \cite{Linz}, we get the existence and uniqueness of the solution. 
\end{proof}

\begin{cor}
The first passage time probability density of a Gauss-Markov process (\ref{sde}) satisfies the following equation
\begin{eqnarray} \label{Volterra}
&&1-\mbox{Erf}\left(\frac{S-m^{(1)}(t)}{\sqrt{2Q^{(11)}(t)}}\right)= \\ \nonumber
&&=\int_{0}^{t} d\vartheta g\left(\vartheta\left|\mathbf{0},0\right.\right)\mathbb{E}_{Z(\vartheta)}\left[1-\mbox{Erf}\left(\frac{S-m^{(1)}(t\left|\right.(S,X_2(\vartheta)),\vartheta)}{\sqrt{2Q^{(11)}(t\left|\right.(S,X_2(\vartheta)),\vartheta)}}\right)\right] \mbox{,} 
\end{eqnarray}
where $m^{(1)}(t) = m^{(1)}(t\left|\right.\mathbf{0},0)$ denotes the first component of the vector (\ref{vector}), $Q^{(11)}(t) = Q_t^{(11)}(t\left|\right.\mathbf{0},0)$ denotes the element on the upper left corner of the matrix (\ref{matrix}) and $\mbox{Erf}(x)$ denotes the error function \cite{Abr}.\\
The Volterra equation (\ref{Volterra}) admits a unique solution if
\begin{equation} \label{cond1}
\frac{\partial}{\partial t}\left(\frac{S-m^{(1)}(t\left|\right.(S,z),\vartheta)}{\sqrt{2Q^{(11)}(t\left|\right.(S,z),\vartheta)}}\right)
\end{equation}
is a continuous function of $t \geq \vartheta \geq 0$.
\end{cor}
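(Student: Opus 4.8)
The plan is to obtain the Corollary as a direct specialization of Theorem~\ref{esistenza e unicit} to a Gauss--Markov process, using only the closed form of the transition density recalled in Section~\ref{mathBack}. There are three steps: rewrite the two probabilities that occur in~(\ref{integral1}) through the error function; substitute them into~(\ref{integral1}); and check that hypothesis~(\ref{hypothesis}) of Theorem~\ref{esistenza e unicit} is guaranteed by the regularity condition~(\ref{cond1}), so that existence and uniqueness are inherited.

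For the first step, since $\mathbf{X}$ is Gauss--Markov and $\det\mathbf{Q}(t\,|\,\mathbf{y},t_0)\neq 0$, the marginal law of $X_1(t)$ given $\mathbf{X}(0)=\mathbf{0}$ is normal with mean $m^{(1)}(t)$ and variance $Q^{(11)}(t)>0$, while the Markov property at time $\vartheta$ together with the Gaussianity of the process restarted at $(S,z)$ gives that $X_1(t)$ given $X_1(\vartheta)=S,\,X_2(\vartheta)=z$ is normal with mean $m^{(1)}(t\,|\,(S,z),\vartheta)$ and variance $Q^{(11)}(t\,|\,(S,z),\vartheta)>0$, these being the $(1,1)$ entries of the corresponding matrices~(\ref{matrix}). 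Using the elementary identity $\mathbb{P}(W\geq S)=\frac{1}{2}\bigl[1-\mbox{Erf}\bigl((S-\mu)/(\sqrt{2}\,\sigma)\bigr)\bigr]$ for $W$ normal with mean $\mu$ and variance $\sigma^{2}$, both sides of~(\ref{integral1}) pick up the common factor $\frac{1}{2}$; because $\mathbb{E}_{Z(\vartheta)}[\,\cdot\,]$ is linear, this factor can be moved through the expectation and cancelled, which is precisely~(\ref{Volterra}).

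For the uniqueness statement I would invoke Theorem~\ref{esistenza e unicit}: it suffices to check that $p(t,\vartheta,z):=\mathbb{P}(X_1(t)\geq S\,|\,X_1(\vartheta)=S,X_2(\vartheta)=z)$ and $\partial p/\partial t$ are continuous on $\{0\leq\vartheta\leq t\}$. Setting $\psi(t,\vartheta,z)=\bigl(S-m^{(1)}(t\,|\,(S,z),\vartheta)\bigr)/\sqrt{2\,Q^{(11)}(t\,|\,(S,z),\vartheta)}$ for $t>\vartheta$, Step~1 gives $p=\frac{1}{2}[1-\mbox{Erf}(\psi)]$, hence $\partial p/\partial t=-\pi^{-1/2}e^{-\psi^{2}}\,\partial\psi/\partial t$, since $\mbox{Erf}$ is an entire function. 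On the open region $\vartheta<t$ the quantities $m^{(1)}$ and $Q^{(11)}$ are smooth in $(t,\vartheta)$, being built from $\phi(\cdot,\cdot)$ via~(\ref{vector})--(\ref{matrix}), and $Q^{(11)}>0$, so $\psi$, $p$ and $\partial p/\partial t$ are automatically continuous there. The only delicate region is the diagonal $t=\vartheta$, where $S-m^{(1)}\to 0$ and $Q^{(11)}\to 0$ simultaneously. Condition~(\ref{cond1}) requires $\partial\psi/\partial t$ to extend continuously to $\{t\geq\vartheta\geq 0\}$; by the fundamental theorem of calculus this forces $\psi$ itself to have a finite, continuous extension to the diagonal, hence so do $e^{-\psi^{2}}$, $p$ and $\partial p/\partial t$, which is exactly hypothesis~(\ref{hypothesis}). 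Theorem~\ref{esistenza e unicit} then yields existence and uniqueness of the solution of~(\ref{Volterra}).

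The hard part is precisely this diagonal analysis. As $t\downarrow\vartheta$ the ratio $\psi$ is an indeterminate $0/0$ form whose limit, finite or infinite, depends on the particular process and on $z$, which is why a clean argument must \emph{assume} the regularity~(\ref{cond1}) rather than derive it. I would further remark that the kernel of~(\ref{integral1}) is not $p$ itself but its $z$-average $\mathbb{E}_{Z(\vartheta)}[p(t,\vartheta,\cdot)]$ against the law of $Z(\vartheta)$; for the integrated processes treated later the velocity $X_2$ is nonnegative at an upcrossing, so $Z(\vartheta)\geq 0$ and this average stays continuous and bounded away from $0$ at $t=\vartheta$ even when $p(t,\vartheta,z)$ alone is not, so that the kernel remains admissible in the sense of Theorem~5.1 of~\cite{Linz}.
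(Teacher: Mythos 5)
Your proposal is correct and follows essentially the same route as the paper: Gaussianity turns both probabilities in~(\ref{integral1}) into the $\frac{1}{2}[1-\mbox{Erf}(\cdot)]$ form, the factor $\frac{1}{2}$ cancels through the expectation to give~(\ref{Volterra}), and the chain rule $\partial p/\partial t=-\pi^{-1/2}e^{-\psi^2}\,\partial\psi/\partial t$ reduces the hypothesis of Theorem~\ref{esistenza e unicit} to the continuity of~(\ref{cond1}). Your additional discussion of the behaviour at the diagonal $t=\vartheta$ is a welcome elaboration the paper leaves implicit, but it does not change the argument.
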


\begin{proof}
Due to the Gaussianity of the process, we have
\begin{eqnarray*}
\mathbb{P}\left(X_1(t)\geq S\left|\mathbf{X}(t_0)=\mathbf{y}\right.\right)
&=&\int_{-\infty}^{+\infty} dx_2 \int_{S}^{+\infty} dx_1 f\left(\mathbf{x},t\left|\right.\mathbf{y},t_0\right)\\
&=&\frac{1}{2}\left(1-\mbox{Erf}\left(\frac{S-m^{(1)}(t\left|\right.\mathbf{y},t_0)}{\sqrt{2Q^{(11)}(t\left|\right.\mathbf{y},t_0)}}\right)\right)\mbox{.}
\end{eqnarray*}
Replacing this result into (\ref{integral1}), we obtain (\ref{Volterra}).\\

\noindent
Since (\ref{cond1}) is continuous for hypothesis, then
\begin{eqnarray*}
& \frac{\partial}{\partial t}\mathbb{P}\left(X_1(t)\geq S\left|\mathbf{X}(t_0)=\mathbf{y}\right.\right)
 =\frac{\partial}{\partial t}
    \frac{1}{2}\left(1-\mbox{Erf}\left(\frac{S-m^{(1)}(t\left|\right.\mathbf{y},t_0)}{\sqrt{2Q^{(11)}(t\left|\right.\mathbf{y},t_0)}}\right)\right)\\
& =-\frac{1}{\sqrt{\pi}}\exp\left\{-\left(\frac{S-m^{(1)}(t\left|\right.\mathbf{y},t_0)}
    {\sqrt{2Q^{(11)}(t\left|\right.\mathbf{y},t_0)}}\right)^2\right\}\frac{\partial}{\partial t}\left(\frac{S-m^{(1)}(t\left|\right.\mathbf{y},t_0)}
    {\sqrt{2Q^{(11)}(t\left|\right.\mathbf{y},t_0)}}\right)\mbox{.}
\end{eqnarray*} 
is continuous. Thus applying Theorem \ref{esistenza e unicit} we get the existence and uniqueness of the solution of (\ref{Volterra}).
\end{proof}

\begin{rmk}\label{C}
The term
$$\mathbb{P}\left(X_1(t)\geq S\left|X_1(\vartheta)=S,X_2(\vartheta)=z\right.\right)$$ 
represents the probability of being over the threshold $S$ after a time interval $(t-\vartheta)$, starting from the threshold itself. 
For a Gauss-Markov process it becomes
$$\mathbb{P}\left(X_1(t)\geq S\left|X_1(\vartheta)=S,X_2(\vartheta)=z\right.\right)= \frac{1}{2}\left[1-\mbox{Erf}\left(\frac{S-m^{(1)}(t\left|\right.(S,z),\vartheta)}{\sqrt{2Q^{(11)}(t\left|\right.(S,z),\vartheta)}}\right)\right] \mbox{.}$$
Thus under weak conditions of its well-definition, applying the l'Hopital's rule, the following limit
$$\lim_{\vartheta\rightarrow t}
\left\{1-\mbox{Erf}\left(\frac{S-m^{(1)}(t\left|\right.(S,z),\vartheta)}{\sqrt{2Q^{(11)}(t\left|\right.\vartheta,(S,z))}}\right)\right\}$$
assume a positive value $C\leq 2$. Then using the dominated convergence theorem we can conclude that 
\begin{equation}\label{limite}
\lim_{\vartheta \rightarrow t}
\mathbb{E}_{Z(\vartheta)}\left[1-\mbox{Erf}\left(\frac{S-m^{(1)}(t\left|\right.(S,X_2(\vartheta)),\vartheta)}{\sqrt{2Q^{(11)}(t\left|\right.(S,X_2(\vartheta)),\vartheta)}}\right)\right]=C .
\end{equation}
\end{rmk}

\begin{rmk}
Note that the random variable $X_2(\vartheta)$, that appear in the expectation (\ref{limite}), has values on an interval $[a,b]$ that changes depending on the features of the process (\ref{sde}).
\end{rmk}

\section{Gauss-Markov processes: a numerical algorithm} \label{num algo}

The complexity of equation (\ref{Volterra}) does not allow to get closed form solutions for $g$. Hence we pursue our study by introducing a numerical algorithm for its solution.\\

\noindent
Let us consider the partition $t_0=0<t_1<\ldots<t_N=t$ of the time interval $[0,t]$ with step $h=t_k-t_{k-1}$ for $k=1,\ldots,N$.\\
Discretizing integral equation (\ref{Volterra}) via Euler method, we have:
\begin{eqnarray}\label{Eulero}
&&1-\mbox{Erf}\left(\frac{S-m^{(1)}(t_k)}{\sqrt{2Q^{(11)}(t_k)}}\right)\\ \nonumber
&=&\sum_{j=1}^k \hat{g}\left(t_j\left|\mathbf{0},0\right.\right)
\mathbb{E}_{Z(t_j)}\left[1-\mbox{Erf}\left(\frac{S-m^{(1)}(t_k\left|\right.(S,X_2(t_j)),t_j)}{\sqrt{2Q^{(11)}(t_k\left|\right.(S,X_2(t_j)),t_j)}}\right)\right] h
\end{eqnarray}
for $k=1,\ldots,N$.\\

\noindent
Equation (\ref{Eulero}) gives the following algorithm for the numerical approximation $\hat{g}\left(\tau\left|\mathbf{0},0\right.\right)$ of $g\left(\tau\left|\mathbf{0},0\right.\right)$, $\tau\in (0,t)$.\\

\noindent
\textbf{Step $1$}
$$\hat{g}\left(t_1\left|\right.\mathbf{0},0\right)= \frac{1}{Ch}\left[1-\mbox{Erf}\left(\frac{S-m^{(1)}(t_1)}{\sqrt{2Q^{(11)}(t_1)}}\right)\right],$$
where $C$ is given by (\ref{limite}).

\hspace{1mm}\\
\textbf{Step $k$}, $k>1$
\begin{eqnarray}\label{algoritmo}
\hat{g}\left(t_k\left|\right.\mathbf{0},0\right)
&=&\frac{1}{Ch}\left\{1-\mbox{Erf}\left(\frac{S-m^{(1)}(t_k)}{\sqrt{2Q^{(11)}(t_k)}}\right)\right\} \\ \nonumber
&-& \frac{1}{C} \sum_{j=1}^{k-1}\hat{g}\left(t_j\left|\right.\mathbf{0},0\right) \mathbb{E}_{Z(t_j)}\left[1-\mbox{Erf}\left(\frac{S-m^{(1)}(t_k\left|\right.(S,X_2(t_j)),t_j)}{\sqrt{2Q^{(11)}(t_k\left|\right.(S,X_2(t_j)),t_j)}}\right)\right] \mbox{.}
\end{eqnarray}
Note that the first term on the r.h.s. is obtained for $j=k$.

\hspace{1mm}\\
To sum up, the FPT probability density function in the knots $t_0,t_1,\ldots, t_N$ is the solution of a linear system $\mathbf{L}\mathbf{\hat{g}}=\mathbf{b}$ where

$$\mathbf{b}=\left(\begin{array}{c}
             1-\mbox{Erf}\left(\frac{S-m^{(1)}(t_1)}{\sqrt{2Q^{(11)}(t_1)}}\right)\\
             \vdots\\     
             1-\mbox{Erf}\left(\frac{S-m^{(1)}(t_N)}{\sqrt{2Q^{(11)}(t_N)}}\right)
             \end{array}\right)\mbox{, }\hspace{1.5mm}
  \mathbf{\hat{g}}=\left(\begin{array}{c}
             \hat{g}\left(t_1\left|\right.\mathbf{0},0\right)\\
             \vdots\\
             \hat{g}\left(t_N\left|\right.\mathbf{0},0\right)
             \end{array}\right)$$
and
$$\mathbf{L}=\left(\begin{array}{ccccc}
    C h \\
    \theta_{2,1} h & C h \\
    \theta_{3,1} h & \theta_{3,2} h & C h \\
    \vdots & \vdots & & \ddots\\
    \theta_{N,1} h & \theta_{N,2} h & \cdots & \cdots & C h
    \end{array}\right),$$

\hspace{1mm}\\

\noindent with
\begin{equation}\label{attesa} \theta_{k,j}=\mathbb{E}_{Z(t_j)}\left[1-\mbox{Erf}\left(\frac{S-m^{(1)}(t_k\left|(S,X_2(t_j)),t_j\right.)}{\sqrt{2Q^{(11)}(t_k\left|(S,X_2(t_j)),t_j\right.)}}\right)\right]
\end{equation} 
for $k=1,\ldots,N$ and $j=1,\ldots,k$.\\

To evaluate the expected value (\ref{attesa})
for $k=1,\ldots,N$ and $j=1,\ldots,k$, we make use of the following Monte Carlo method.\\
We repeatedly simulate the bivariate process until the first component crosses the boundary and we collect the sequence $\left\{Z_i,i=1,\ldots M\right\}$ of i.i.d random variables with probability distribution function (\ref{densCond}) with $t=t_j$. Then we compute the sample mean 
\begin{equation}\label{attesaMC}
\hat{\theta}_{k,j}=
1-\frac{\sum_{i=1}^{M}\mbox{Erf}\left(\frac{S-m^{(1)}(t_k\left|(S,Z_i),t_j\right.)}{\sqrt{2Q^{(11)}(t_k\left|(S,Z_i),t_j\right.)}}\right)}{M} \mbox{.}
\end{equation} 
Here $M$ is the sample size.

The following theorem proves that this algorithm converges.
In order to simplify the notations of the theorem, let us first define 
$$\psi(z;t_k,t_j):=\mbox{Erf}\left(\frac{S-m^{(1)}(t_k\left|\right.(S,z),t_j)}{\sqrt{2Q^{(11)}(t_k\left|\right.(S,z),t_j)}}\right)$$
for $k=1,\ldots,N$, $j=1,\dots,k$.
Then it holds

\begin{thm}\label{error}
If the sample size $M$ for the Monte Carlo method is such that the error $|\lambda|= h^2$ at a confidence level $\alpha$ and if there exists a constant $a$, such that for all $h > 0$
\begin{equation}\label{hp}
\max_{1\leq k\leq N,1\leq j\leq k-1} \mathbb{E}_{Z(t_j)}\left[\psi(X_2(t_j),t_k,t_j)-\psi(X_2(t_j),t_{k-1},t_j)\right]\leq a h,
\end{equation}
then the error $\epsilon_k=\hat{g}\left(t_k\left|\right.\mathbf{0},0\right)-g\left(t_k\left|\right.\mathbf{0},0\right)$ of the proposed algorithm at the discretization knots $t_k$, for $k=1,2,\ldots$ is
$\left|\epsilon_k\right|=O(h)$
at the same confidence level $\alpha$.
\end{thm}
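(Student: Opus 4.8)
The plan is to split the total error into a deterministic discretization error and a Monte Carlo perturbation error and to bound each by $O(h)$. Let $\tilde g(t_1),\dots,\tilde g(t_N)$ denote the solution of the \emph{exact-kernel} scheme, that is, of the linear system obtained from (\ref{Eulero}) on replacing every sample mean $\hat\theta_{k,j}$ of (\ref{attesaMC}) by the true expectation $\theta_{k,j}$ of (\ref{attesa}), and write
\[
\epsilon_k=\left(\hat g(t_k|\mathbf 0,0)-\tilde g(t_k)\right)+\left(\tilde g(t_k)-g(t_k|\mathbf 0,0)\right)=:\delta_k+\eta_k .
\]
The term $\eta_k$ is the error of the Euler discretization of a first-kind Volterra equation with non-vanishing diagonal kernel, so I would bound it by invoking the convergence theory of \cite{Linz} already used in Theorem \ref{esistenza e unicit}: the required regularity — continuity of the kernel and of its $t$-derivative, continuity of the left-hand side of (\ref{Volterra}) and of its $t$-derivative, vanishing of that left-hand side at $t=0$, and $k(t,t)=C\neq0$ — holds by Theorem \ref{esistenza e unicit} and Remark \ref{C}, while hypothesis (\ref{hp}) supplies the $O(h)$ bound on the row-to-row variation $\theta_{k,j}-\theta_{k-1,j}$ of the kernel that keeps the quadrature consistency error $O(h)$. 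This gives $|\eta_k|=O(h)$ uniformly on $[0,t]$; since the true density $g$ is bounded on the compact interval $[0,t]$, the values $\tilde g(t_j)$ are then uniformly bounded, say by a constant $G$.

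The new content is the bound on the stochastic perturbation $\delta_k$. Subtracting the two lower-triangular systems and cancelling the common factor $h$ gives, with $\lambda_{k,j}:=\hat\theta_{k,j}-\theta_{k,j}$,
\[
C\,\delta_k+\sum_{j=1}^{k-1}\hat\theta_{k,j}\,\delta_j=-\sum_{j=1}^{k-1}\lambda_{k,j}\,\tilde g(t_j)=:\rho_k ,
\]
and $\delta_1=0$ because the first step of the algorithm uses only the constant $C$. The step I expect to be the main obstacle is that one cannot estimate $\delta_k$ from this identity directly: its leading term $C\,\delta_k$ carries no factor $h$ (the equation being of the first kind), so applying the discrete Gronwall inequality to it at once would yield a factor $(1+\mathrm{const})^{N}$ with $N=t/h$, which explodes as $h\to0$. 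The remedy is to write the identity at the two consecutive levels $k$ and $k-1$ and subtract, producing
\[
C\,\delta_k=(C-\hat\theta_{k,k-1})\,\delta_{k-1}-\sum_{j=1}^{k-2}(\hat\theta_{k,j}-\hat\theta_{k-1,j})\,\delta_j+(\rho_k-\rho_{k-1}) .
\]
On the event of probability at least $\alpha$ on which $|\lambda_{k,j}|\le h^2$ for all relevant $k,j$, each of the coefficients $C-\hat\theta_{k,k-1}$ and $\hat\theta_{k,j}-\hat\theta_{k-1,j}$ is $O(h)$ — this combines hypothesis (\ref{hp}), which also controls $\theta_{k,k-1}-C$, with $|\lambda_{k,j}|\le h^2$ — and $|\rho_k-\rho_{k-1}|\le(1+2(k-2))Gh^2\le(1+2t)Gh=O(h)$; it is exactly here that $|\lambda|=h^2$, and not merely $O(h)$, is essential, since with $k\le N=t/h$ summands a Monte Carlo error of order $h$ would leave $\rho_k-\rho_{k-1}$ of order $1$ and destroy the estimate. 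Hence $|\delta_k|\le\frac{O(h)}{|C|}+\frac{\bar a\,h}{|C|}\sum_{j=1}^{k-1}|\delta_j|$ for a constant $\bar a$, and since $k-1\le N$ the discrete Gronwall inequality now gives $|\delta_k|\le\frac{O(h)}{|C|}\left(1+\frac{\bar a\,h}{|C|}\right)^{k-1}\le\frac{O(h)}{|C|}\,e^{\bar a\,t/|C|}=O(h)$. Combining the two contributions, $|\epsilon_k|\le|\delta_k|+|\eta_k|=O(h)$ at confidence level $\alpha$, which is the assertion. The remaining points are routine: the uniform boundedness of $\tilde g(t_j)$ (used in the bounds on $\rho_k$ and $\rho_k-\rho_{k-1}$, and following from $|\eta_k|=O(h)$ together with boundedness of $g$) and the elementary estimates for $C-\hat\theta_{k,k-1}$ and for the quadrature consistency error.
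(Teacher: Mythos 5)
Your proof is correct and rests on the same key mechanism as the paper's: differencing the discrete system between consecutive knots $k$ and $k-1$ so that the unknown appears with the non-vanishing coefficient $Ch$, bounding the row-to-row kernel variation by $ah$ via (\ref{hp}), using $|\lambda|=h^2$ to survive the division by $h$, and closing with a discrete Gronwall estimate (Theorem 7.1 of \cite{Linz}). The only difference is organizational --- you split the error into a deterministic Euler part $\eta_k$ (delegated to Linz's convergence theory) and a Monte Carlo perturbation $\delta_k$, whereas the paper runs a single recursion for the full error $\epsilon_k$ carrying both sources at once --- and your tracking of the individual errors $\lambda_{k,j}$ with the resulting bound $|\rho_k-\rho_{k-1}|\leq (2k-3)Gh^2=O(h)$ is in fact slightly more careful than the paper's use of a single $\lambda$ common to all entries.
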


\begin{proof}
The Euler method and the Monte Carlo method applied to (\ref{Volterra}) give
\begin{eqnarray}\label{stima}
1-\mbox{Erf}\left(\frac{S-m^{(1)}(t_k)}{\sqrt{2Q^{(11)}(t_k)}}\right)=
\sum_{j=1}^{k} h \hat{g}\left(t_j\left|\mathbf{0},0\right.\right)\hat{\theta}_{k,j}
\end{eqnarray}
while (\ref{Volterra}) can be rewritten as
\begin{eqnarray}\label{esatta}
1-\mbox{Erf}\left(\frac{S-m^{(1)}(t_k)}{\sqrt{2Q^{(11)}(t_k)}}\right)=
\sum_{j=1}^{k} h g\left(t_j\left|\mathbf{0},0\right.\right)\left(\hat{\theta}_{k,j}+\lambda\right)+ \delta(h,t_k)
\end{eqnarray}
where $\delta(h,t_k)$ denotes the error of Euler method and $\lambda$ indicates the error of the Monte Carlo method at confidence level $\alpha$.\\
Subtracting (\ref{esatta}) from (\ref{stima}) we obtain
\begin{equation} \label{errore}
\delta(h,t_k)=h\sum_{j=1}^{k}\left(\hat{\theta}_{k,j}\epsilon_j+\lambda g\left(t_j\left|\mathbf{0},0\right.\right)\right).
\end{equation}
Differencing (\ref{errore}) and using (\ref{limite}), we get
$$\delta(h,t_k)-\delta(h,t_{k-1})=h\sum_{j=1}^{k-1}\left(\hat{\theta}_{k,j}-\hat{\theta}_{k-1,j} \right)\epsilon_j+hC\epsilon_k+g\left(t_k\left|\mathbf{0},0\right.\right)\lambda$$
or equally
$$\epsilon_k=\frac{1}{Ch}\left[\delta(h,t_k)-\delta(h,t_{k-1})\right]-\frac{1}{C}\sum_{j=1}^{k-1}\left(\hat{\theta}_{k,j}-\hat{\theta}_{k-1,j} \right)\epsilon_j -\frac{g\left(t_k\left|\mathbf{0},0\right.\right)\lambda}{hC}.$$
Then, due to the hypothesis (\ref{hp}) and to the large number law, choosing $M$ large enough, we have
$$\left|\epsilon_k\right| \leq \frac{1}{Ch}\left|\delta(h,t_k)-\delta(h,t_{k-1})\right|+ \frac{a h}{C}\sum_{j=1}^{k-1}\epsilon_j +\frac{g\left(t_k\left|\mathbf{0},0\right.\right)|\lambda|}{hC}.$$
Finally, observing that the error of Euler method is $\left|\delta(h,t)\right| = O(h^2)$, choosing $M$ such that the error of the Monte Carlo method is $|\lambda |= h^2$ and applying Theorem 7.1 of \cite{Linz}, we get the thesis.
\end{proof}

\begin{rmk}
In the autonomous case, hypothesis (\ref{hp}) is verified.\\
Indeed
\begin{eqnarray}\label{inequality1}
\psi\left(z;t_k,t_j\right)-\psi\left(z;t_{k-1},t_j\right)
&=&\frac{2}{\sqrt{\pi}}\int^{\beta\left((S,z),t_k,t_j\right)}_{\beta\left((S,z),t_{k-1},t_j\right)} e^{-y^2/2} dy\\ \nonumber
&\leq&a_1\left[\beta\left((S,z),t_k,t_j\right)-\beta\left((S,z),t_{k-1},t_j\right)\right]
\end{eqnarray}
where
$$\beta\left((S,z),t_i,t_l\right)=\frac{S-m^{(1)}(t_i\left|\right.(S,z),t_l)}{\sqrt{2Q^{(11)}(t_i\left|\right.(S,z),t_l)}}$$
Using equation (\ref{newmatrix}), we get that
$$Q^{(11)}(t_k\left|\right.(S,z),t_j)=Q^{(11)}(t_{k-1}\left|\right.(S,z),t_j)+\int_0^h e^{Au}\mathbf{M}\mathbf{M}^{'}e^{A^{'}u}du .$$
Hence inequality (\ref{inequality1}) becomes
\begin{eqnarray}\label{inequality2}
\psi\left(z;t_k,t_j\right)-\psi\left(z;t_{k-1},t_j\right)\leq a_2\frac{m^{(1)}(t_k\left|\right.(S,z),t_j)-m^{(1)}(t_{k-1}\left|\right.(S,z),t_j)}{\sqrt{Q^{(11)}(t_{k-1}\left|\right.(S,z),t_j)}},
\end{eqnarray}
where $a_2$ is a new constant.\\
Using equation (\ref{newvector}), $\mathbf{y}=(S,z)$ and $t_k-t_{k-1}=h$, $k=1,\ldots N$, we can conclude that
\begin{eqnarray*}
&&\mathbf{m}\left(t_k|\mathbf{y},t_j\right)-\mathbf{m}\left(t_{k-1}|\mathbf{y},t_j\right)\\
&\leq& a_3\left\{\left(A(t_k-t_{k-1})\right)\mathbf{y}+\int_{t_j}^{t_k}\left(I+A(t_k-u)\right)\mathbf{M}du-\int_{t_j}^{t_{k-1}}\left(I+A(t_{k-1}-u)\right)\mathbf{M}du\right\}\\
&=&
a_3\left\{(A\mathbf{y}+\mathbf{M})h-A\left(k-j-\frac{1}{2}\right)\mathbf{M}h^2\right\}.
\end{eqnarray*}

Therefore inequality (\ref{inequality2}) becomes
\begin{eqnarray*}
\psi\left(z,t_k,t_j\right)-\psi\left(z,t_{k-1},t_j\right)
\leq\frac{a_4h+a_5h^2}{\sqrt{Q_{t_{k-1}}^{(11)}((S,z),t_j)}}
=O(h).
\end{eqnarray*}

\end{rmk}

\section{Examples}
In this section we apply the algorithm presented in Sections \ref{num algo} to some examples. Firstly we consider an IBM and an IOU Process. Recent examples of application of the IBM and the IOU processes have appeared in the metrological literature \cite{panfiloTavellaZucca} to model the error of atomic clock. Then we will consider the two-compartment model of a neuron \cite{Lansky}, whose underlying process is a bivariate Ornstein Uhlenbeck process.

\subsection{Integrated Brownian Motion}

The Integrated Brownian Motion by itself is not a Gauss-Markov process because it is not a Markov process. However we can study this one dimensional process as a bivariate process together with a Standard Brownian motion, as follows
\begin{equation}\label{IBM}
\left\{\begin{array}{lll}
        dX_1(t)&=&X_2(t)dt\\
        \hspace{5mm}\\
        dX_2(t)&=&dB_t,
        \end{array}\right.
\end{equation}
with $\mathbf{X}(0)=\mathbf{0}$.\\
The process (\ref{IBM}) is a particular case of the Gauss-Markov process (\ref{sde}), where 
$$\mathbf{A}(t)=\left(\begin{array}{cc} 0 & 1 \\ 0 & 0 \end{array}\right) \mbox{, } \mathbf{M}(t)=\left(\begin{array}{ll} 0 \\ 0 \end{array}\right) \mbox{ and }  
\mathbf{G}(t)=\left(\begin{array}{cc} 0 & 0 \\ 0 & 1 \end{array}\right) \mbox{.}$$

There exist analytical solutions of the first passage time problem of the integrated component of the process (\ref{IBM}), but they are not efficient because they involve multiple integrals \cite{Goldman} or suppose particular symmetry properties \cite{giornoSymmetrytwoDim}.\\
Hence, we numerically solve the first passage time problem for an IBM using the algorithm proposed in Section \ref{num algo}. A first attempt in this direction was discussed in \cite{Lidia}.
In this instance the FPT probability density function through a boundary S in the knots $t_0,t_1,\ldots t_N$ is solution of a linear system $\mathbf{L}\mathbf{g}=\mathbf{b}$ where

$$\mathbf{b}=\left(\begin{array}{c}
             1-\mbox{Erf}\left(\frac{\sqrt{6}S}{2t_1^{3/2}}\right)\\
             \vdots\\     
             1-\mbox{Erf}\left(\frac{\sqrt{6}S}{2t_N^{3/2}}\right)
             \end{array}\right) \mbox{, }\hspace{1.5mm}
  \mathbf{g}=\left(\begin{array}{c}
             g\left(t_1\left|\right.\mathbf{0},0\right)\\
             \vdots\\
             g\left(t_N\left|\right.\mathbf{0},0\right)
             \end{array}\right)$$
and
$$\mathbf{L}=\left(\begin{array}{ccccc}
    2 h \\
    \theta_{2,1} h & 2 h \\
    \theta_{3,1} h & \theta_{3,2} h & 2 h \\
    \vdots & \vdots & & \ddots\\
    \theta_{N,1} h & \theta_{N,2} h & \cdots & \cdots & 2 h
    \end{array}\right),$$

\hspace{1mm}\\

\noindent with $\theta_{k,j}=\mathbb{E}_{Z(t_j)}\left[1+\mbox{Erf}\left(\frac{\sqrt{6}X_2(t_j)}{2\sqrt{(t_k-t_j)h}}\right)\right]$ 
for $k,j=1,\ldots,N$.\\

\noindent
Note that in this case the constant $C$ defined in Remark \ref{C} is equal to 2 and the range of the random variable $X_2(T)$  is $[0,\infty]$.\\

\noindent
In Figure \ref{IBMfig} we show the FPT probability density function of an IBM through a boundary $S$ for three different values of the boundary.
\begin{figure}[!h]
\includegraphics[width=0.90\textwidth]{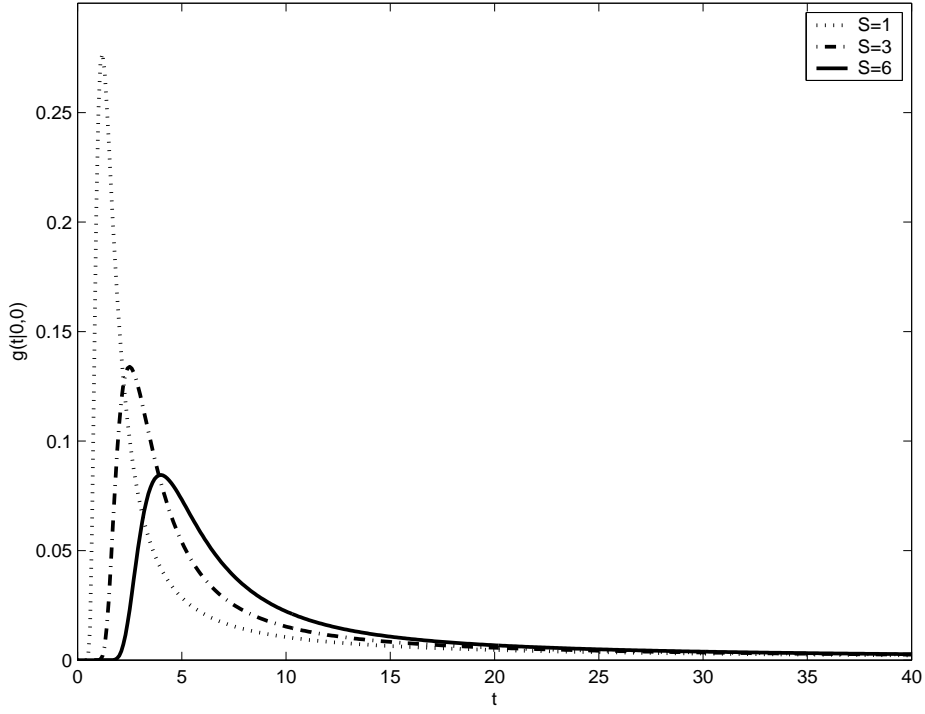}
	\caption{Evaluation of the FPT probability density function for an IBM through three different boundaries: $S=1$ (dotted), $S=3$ (dashdot), $S=6$ (solid).}
	\label{IBMfig}
\end{figure}

\subsection{Integrated Ornstein Uhlenbeck Process}

As the IBM, the IOU Process is not a Markov process and it should be studied as a bivariate process together with an Ornstein Uhlenbeck Process, as follows
\begin{equation}\label{IOU}
\left\{\begin{array}{lll}
        dX_1(t)&=&X_2(t)dt\\
        \hspace{5mm}\\
        dX_2(t)&=&\left(-\alpha X_2(t) + \mu \right)dt + \sigma dB_t,
        \end{array}\right. 
\end{equation}
with $\mathbf{X}(0)=\mathbf{0}$.
The process (\ref{IOU}) is a Gauss-Markov process (\ref{sde}), where 
$$\mathbf{A}(t)=\left(\begin{array}{cc} 0 & 1 \\ 0 & -\alpha \end{array}\right) \mbox{, } \mathbf{M}(t)=\left(\begin{array}{ll} 0 \\ \mu \end{array}\right) \mbox{ and }  
\mathbf{G}(t)=\left(\begin{array}{cc} 0 & 0 \\ 0 & \sigma \end{array}\right) \mbox{.}$$

\noindent
Note that in this case the constant $C$ defined in Remark \ref{C} is equal to 1  and the range of the random variable $X_2(T)$ is $[-\infty,\infty]$.\\

\noindent
In Figure \ref{IOUfig1} we show the FPT probability density function of the IOU through a boundary $S=6$, for $\mu=0.01$, $\sigma=1$ and three different values of the parameter $\alpha$.

\begin{figure}[!h]
		\includegraphics[width=0.90\textwidth]{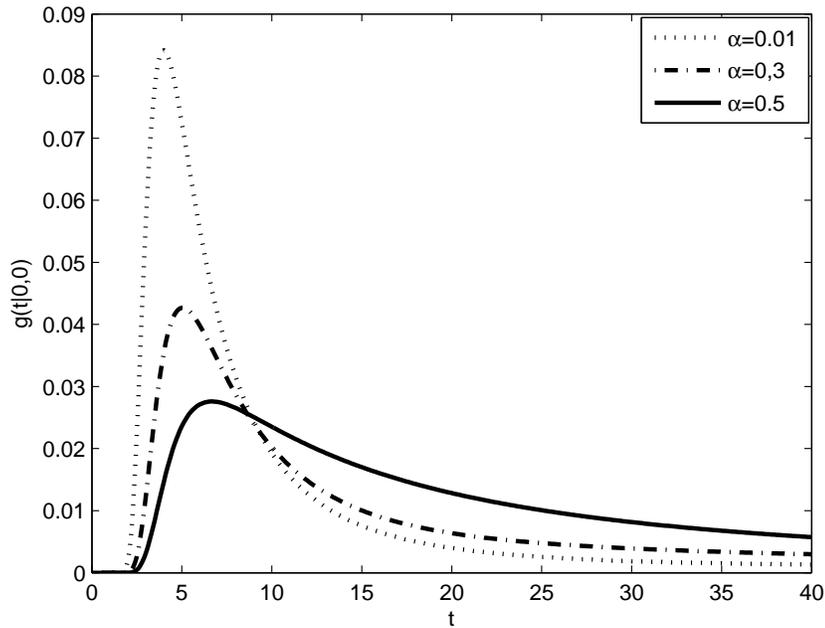}
	\caption{Evaluation of the FPT probability density function of the IOU through a boundary $S=6$ for $\mu=0.01$, $\sigma=1$ and three different values of the parameter $\alpha$: $\alpha=0.01$ (dotted), $\alpha=0.3$ (dashdot), and $\alpha=0.5$  (solid).}
	\label{IOUfig1}
\end{figure}

\noindent Note that the curve for $\alpha=0.01$ in Figure \ref{IOUfig1} is very similar to the curve for $S=6$ in Figure \ref{IBMfig}, indeed if $\mu\rightarrow 0$ and $\alpha\rightarrow 0$ IOU converges to a standard IBM.

\subsection{Two-compartment model}

One dimensional neuronal models \cite{RiccLectureNotes} identify the membrane potential values on the different parts of the neuron with those assumed in the trigger zone. To improve the model \cite{Lansky} proposes a two dimensional approach. The membrane potential of a neuron is described by means of a bivariate stochastic process, whose components represent the depolarizations of two distinct parts, the trigger zone and the dentritic one. \\

\noindent
Let $(X_1(t))_{t\geq 0}$ and $(X_2(t))_{t\geq 0}$ be the stochastic processes associated to the depolarization of the trigger zone and the dendritic one, respectively. Then, assuming that external inputs, of intensity $\mu$ and variability $\sigma$, influence only the second compartment and taking the interconnection between the parts into account, we obtain the following stochastic model
\begin{equation}\label{model}
\left\{\begin{array}{lll} dX_1(t)=\left\{-\alpha X_1(t) + \alpha_r \left[X_2(t)-X_1(t)\right]\right\}dt\\
                            \vspace{0.1mm}\\
                            dX_2(t)=\left\{-\alpha X_2(t) + \alpha_r \left[X_1(t)-X_2(t)\right] + \mu\right\}dt + \sigma dB_t \end{array} \right.
\end{equation}
with $\mathbf{X}(0)=\mathbf{0}$ and where $\alpha$ and $\alpha_r$ are constant related to the spontaneous membrane potential decay and to the intensity of the connection between the two compartments, respectively (cf. Fig \ref{BiModel}). 

\begin{figure}[!h]
		\includegraphics[width=0.90\textwidth]{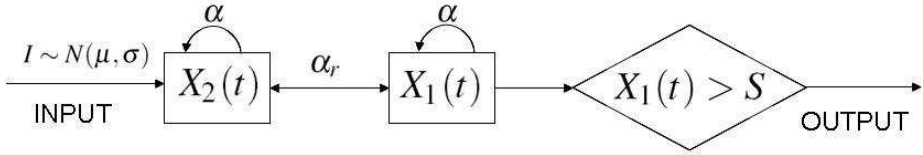}
	\caption{Schematic representation of the two-compartment approach}
	\label{BiModel}
\end{figure}

\noindent
The process (\ref{model}) is an Ornstein Uhlenbeck two-dimensional process, particular case of the Gauss-Markov process (\ref{sde}) where 
$$\mathbf{A}(t)=\left(\begin{array}{cc} -\alpha-\alpha_r & \alpha_r \\ \alpha_r & -\alpha-\alpha_r \end{array}\right) \mbox{, } \mathbf{M}(t)=\left(\begin{array}{ll} 0 \\ \mu \end{array}\right) \mbox{ and }  
\mathbf{G}(t)=\left(\begin{array}{cc} 0 & 0 \\ 0 & \sigma \end{array}\right) \mbox{.}$$
Note that in this case the constant $C$ defined in Remark \ref{C} is equal to 2  and the range of the random variable $X_2(T)$ is $[kS,\infty]$, where 
	\[k=\frac{\alpha+\alpha_r}{\alpha_r}.
\]

\noindent
Assuming that after each spike the system is reset to its initial value, the time between two spikes, i.e. the time when the membrane potential changes its dynamics with a sudden hyperpolarization, is described by the FPT of the first component through a boundary $S$.\\
In \cite{Lansky} this model was studied using simulation techniques, but applying the algorithm proposed in Section \ref{num algo} we can compute the interspike intervals distribution as the FPT probability density function of the first component of $\mathbf{X}(t)$.
In Figure \ref{BICOMPfig1} we show the FPT density for a set of values of the input.\\

\begin{figure}[!h]
		\includegraphics[width=0.90\textwidth]{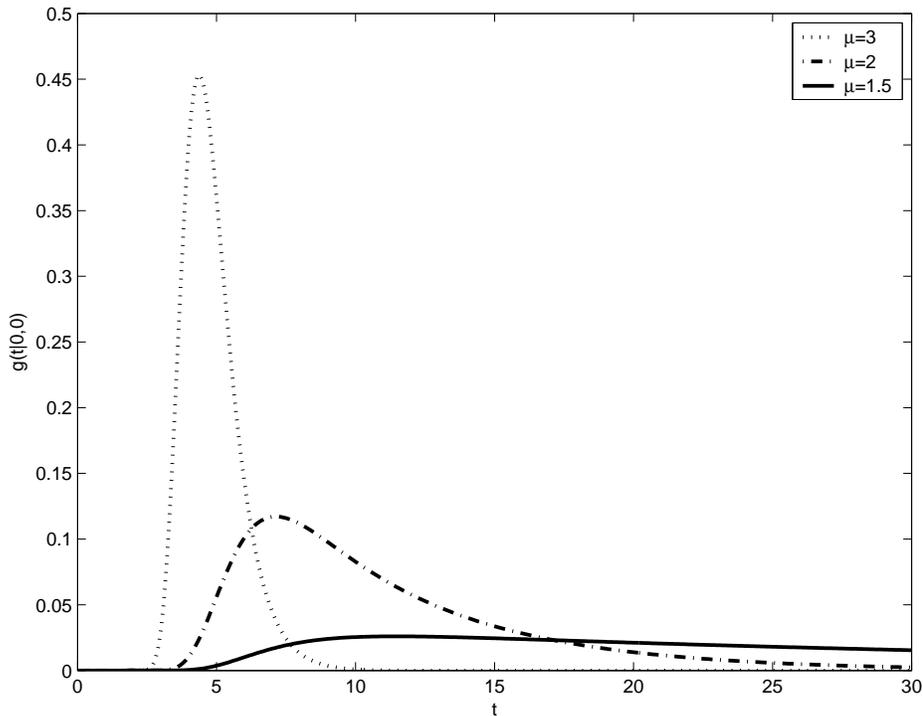}
	\caption{Evaluation of the FPT probability density function through a boundary $S=6$ for a two-compartment stochastic model of a neuron, choosing $\alpha=0.33$, $\alpha_r=0.2$, $\sigma=1$ and three different values of $\mu$: $\mu=1.5$ (solid), $\mu=2$ (dashdot), and $\mu=3$ (dotted).}
	\label{BICOMPfig1}
\end{figure}

\section{Numerical algorithm versus a totally simulation algorithm}

The introduced numerical method involves a Monte Carlo estimation to evaluate the expected value (\ref{attesa}). One may wonder about the advantages of the proposed method with respect to a totally simulation algorithm. Indeed it is easy to simulate $M$ sample paths of the considered bivariate process to get a sample of FPTs. These FPTs could be used to draw histograms or their continuous approximations. However this approach is computationally expensive. Indeed it requires large samples to give reliable results. Moreover the estimation of the tails of the distribution is scarcely reliable and time consuming. 

On the contrary the numerical method proposed here requests weak computational efforts despite the presence of the Monte Carlo method and is applied to estimate a specific integral, i.e. the expectation (\ref{attesa}). Indeed, also a small number of trajectories guarantees reliable results.

In Figure \ref{IBMhist} we compare the accuracy of the results obtained with the two methods. We simulate $M$ sample paths of the IBM in order to determine a sample of $M$ FPTs and we use it to draw the corresponding histogram. The same sample is used to compute the sample mean (\ref{attesaMC}) to get the FPT probability density function via the numerical method. The choice $M=1000$ gives reliable results in both cases. However, when $M=50$ the histogram is rude while the numerical method does not loose its reliability. 
We further underline how the computational time to build the histogram or to draw the FPT density with the proposed numerical method are comparable, for the same value of $M$. 

In Figure \ref{IBM1000_50N} we show the shapes of the FPT probability density function obtained via the proposed numerical algorithm. We use a sample of size $M=50$ (solid line) and $M=1000$ (dash line) to compute (\ref{attesaMC}). Their differences are negligible.
 
\begin{figure}
\subfloat[$M=50$ ]{\label{IBM50}
  \includegraphics[width=0.9\textwidth, height=8cm]{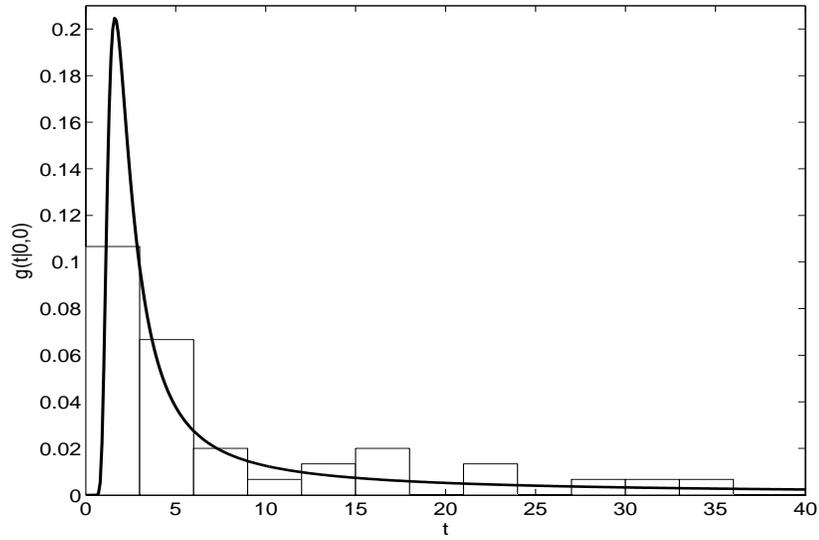}} \\               
  \subfloat[$M=1000$ ]{\label{IBM1000}
  \includegraphics[width=0.9\textwidth, height=8cm]{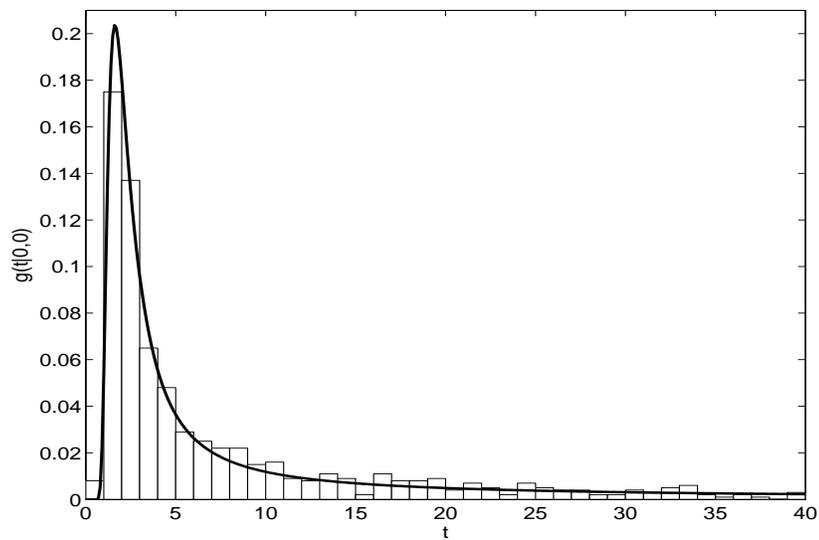}}
  \caption{FPT probability density function for the IBM obtained via the proposed numerical method and the corresponding histogram. Sample of size $M$ has been used to build the histogram and to evaluate (\ref{attesaMC}) in the numerical method: (a) $M=50$ (b) $M=1000$.}
  \label{IBMhist}
\end{figure}

\begin{figure}[!h]
		\includegraphics[width=0.90\textwidth]{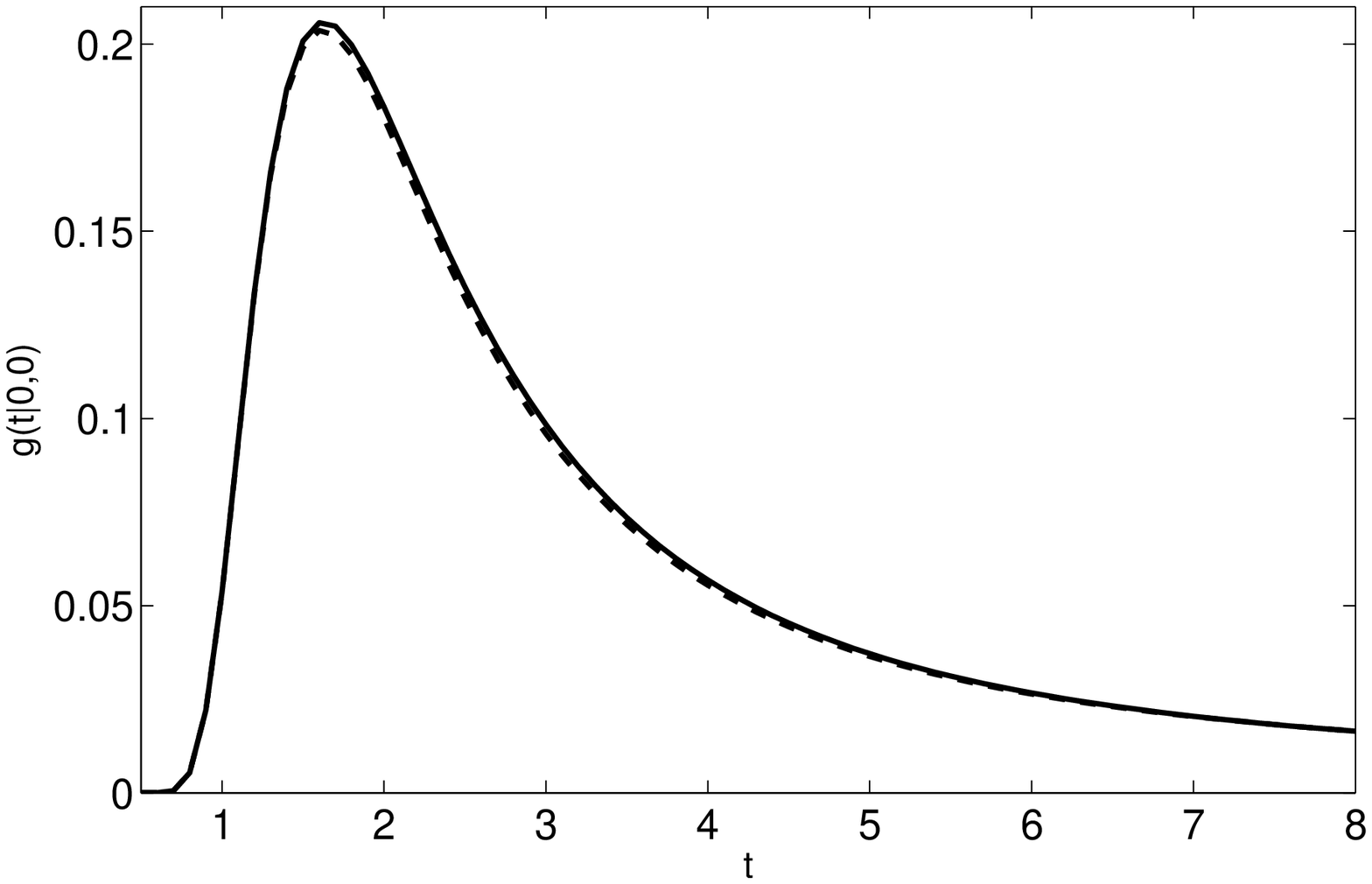}
	\caption{FPT probability density function for the IBM obtained via the proposed numerical method computed using a sample of size $M=50$ (solid line) and $M=1000$ (dash line) for the sample mean (\ref{attesaMC}).}
	\label{IBM1000_50N}
\end{figure}

\section{Conclusion}
We studied the FPT of one component of a bivariate diffusion process through a boundary. We wrote a new integral equation for the FPT probability density function proving its existence and uniqueness. A numerical algorithm for its solution was developed proving its convergence properties. The algorithm was applied to a set of processes of interest for various applications. Advantages of the method with respect to a totally simulation algorithm are discussed.

The crossing problem for one component of a multivariate process or the case of random initial value can be treated as extensions of the proposed equations and of the numerical algorithm.

\section{Acknowledgements}
Work supported in part by MIUR Project PRIN-Cofin 2008.








\end{document}